\theoremstyle{plain}
\newtheorem{theorem}{\indent Theorem}[section]
\newtheorem{lemma}[theorem]{\indent Lemma}
\theoremstyle{definition}
\newtheorem{definition}[theorem]{\indent Definition}
\newtheorem{example}[theorem]{\indent Example}
\theoremstyle{remark}
\newcommand{\disc}{\textup{\textbf{Disc}\thinspace}}
\begin{document}
\null
\vskip 1.8truecm

\title[Negotiation sets: a general framework]
{Negotiation sets: a general framework} 
\author{Tomasz Witczak}    
\address{Institute of Mathematics\\ Faculty of Science and Technology
\\ University of Silesia\\ Bankowa~14\\ 40-007 Katowice\\ Poland}
\email{tm.witczak@gmail.com} 


\begin{abstract}It is well-known fact that there exists $1-1$ correspondence between so-called double (or flou) sets and intuitionistic sets (also known as orthopairs). At first glance, these two concepts seem to be irreconcilable. However, one must remember that algebraic operations  in these two classes are also defined differently. Hence, the expected compatibility is possible. Contrary to this approach, we combine standard definition of double set with operations which are typical for intuitionistic sets. We show certain advantages and limitations of this viewpoint. Moreover, we suggest an interpretation of our sets and operations in terms of logic, data clustering and multi-criteria decision making. As a result, we obtain a structure of discussion between several participants who propose their "necessary" and "allowable" requirements or propositions.
\end{abstract}

\subjclass{Primary: 03B45, 03E75 ; Secondary: 68T27, 68T30}
\keywords{Intuitionistic sets, double sets, formal concept analysis, data clustering}


\maketitle

\section{Introduction}

Intuitionistic sets have been introduced by \c{C}oker (see \cite{3}). In fact, they are special (crisp) type of intuitionistic fuzzy sets (investigated earlier by Atanassov). Analogously, intuitionistic topological spaces (introduced by \c{C}oker in \cite{4}) can be considered as a crisp case of intuitionistic fuzzy topological spaces. Many authors contributed to the development of intuitionistic topologies since the beginning of XXI century. Nowadays, there are intuitionistic analogues of the majority of well-known topological notions (like interior, closure, density, continuity, compactness etc.). However, in this article we do not deal too much with topology. Rather, we are interested in sets as such. 

One thing should be pointed out: we do not participate in a terminological debate about whether it is sensible to call these sets (be they fuzzy or not) "intuitionistic". This topic has been extensively discussed by Guti\'{e}rrez Garcia and Rodabaugh in \cite{5}, Cattaneo and Ciucci in \cite{1} and also by Ciucci in \cite{2}. In general, we may agree with these authors that the term in question is not adequate. However, it seems that the notion of \emph{intuitionistic set} has been widely accepted or at least tolerated by many mathematicians.

Be as it may, the idea of intuitionistic set is simple: if we have non-empty universe $X$ with two subsets $A^1$ and $A^2$ such that $A^1 \cap A^2 = \emptyset$, then we define \emph{intuitionistic set} $A$ as $[A^1, A^2]$, assuming that if $A$ and $B$ are both intuitionistic sets, then $A \cap B = [A^1 \cap B^1, A^2 \cup B^2]$ and $A \cup B = [A^1 \cup B^1, A^2 \cap B^2]$. These notions are properly defined because the result of both operations is an intuitionistic set too. On the other hand, \emph{double sets} (see \cite{6}) are defined as ordered pairs $[A^1, A^2]$ such that $A^1 \subseteq A^2$. However, we assume that if $A$ and $B$ are double, then $A \cap B = [A^1 \cap B^1, A^2 \cap B^2]$ and $A \cup B = [A^1 \cup B^1, A^2 \cup B^2]$. Again, these operations do not lead out of the class. Now it is simple to show that each double set $[A^1, A^2]$ can be considered as an intuitionistic set $[A^1, -A^2]$, where $-A^2$ is just a complement of $-A^2$ with respect to the universe $X$. 

But there the question arises: is it reasonable to combine intuitionistic definitions of intersection and union with double sets? First of all, let us explain our motivation. Imagine that $A^1$ and $A^2$ are two sets of arguments, formulas, scenarios or certain other objects. We do not precise their nature: they can be items, scenarios or logical formulas. If these objects are only \emph{possible} (or \emph{allowable} a.k.a. \emph{admissible}), then they are somewhere in $A^2$. If they are \emph{necessary} (or \emph{obligatory}), then we put them into $A^1$. Hence, it as natural that $A^1$ is contained in $A^2$ (because what is necessary should also be possible; this is natural assumption, well-known from almost all modal logics which are not extremely weak). Assume now that we have two double sets $A$ and $B$. Let us consider the following new objects (or, equivalently, operations):

$[A^1 \cap B^1, A^2 \cup B^2]$

$[ (A^1 \cup B^1) \cap (A^2 \cap B^2), A^2 \cap B^2 ]$

These operations make sense in the light of our interpretation. Let us treat different double sets as different points of view of certain agents (or decision makers). Then our new objects describe two variants of \emph{compromise} between two agents, represented\footnote{Later we shall identify agents with their double sets.} by $A$ and $B$. In the first case, \emph{more} things are acceptable now and \emph{fewer} things are necessary. The second case is nearly symmetrical: more things are necessary and fewer things are acceptable. However, we must ensure ourselves that our new range of necessity is contained in the range of admissibility of both agents. For this reason, this kind of compromise is slightly more complicated than the first one (we intersect $A^1 \cup B^1$ with the intersection of $A^2$ and $B^2$). From the mathematical point of view, it means that our new object should still be a double set.

Our motivation is practical. For this reason, we are interested mostly in finite universes (collections of objects), even if it is possible to consider arbitrary ones. Moreover, we admit the thought that some versions of our framework may not solve each conflict but only some of them. This is typical for many decision systems: that some cases are left as unsolvable and some scenarios are equally good. In such situations we must use other, external criteria. These topics will be discussed in the last section of our paper.

\section{Formal exposition}

In this section we shall see formal (and more general) definitions of the functions and objects introduced earlier. Also, their basic algebraic properties will be studied. 

\subsection{Basic definitions}
The first three definitions are typical for double sets (see \cite{6} for example). However, we shall use the notion of \emph{negotiation set} to speak about double sets considered in the context of our new operations.

\begin{definition}
Assume that $X$ is a non-empty universe (consisting of \emph{objects}), $A^1, A^2 \subseteq X$ and $A^1 \subseteq A^2$. An ordered pair $[A^1, A^2]$ is called a \emph{negotiation set} (on $X$). We say that $A^1$ is \emph{range of necessity} (of $A$), while $A^2$ is its \emph{range of admissibility}. 
\end{definition}

\begin{definition}
Assume that $X$ is a non-empty universe and $A, B$ are two negotiation sets. We define \emph{complement} of $A$ as $-A = [-A^2, -A^1]$. Moreover, we define \emph{difference} of $A$ and $B$ as $A \setminus B = [A^1 \setminus B^2, A^2 \setminus B^1]$. We say that $A \subseteq B$ if and only if $A^1 \subseteq B^1$, $A^2 \subseteq B^2$. 

\end{definition}

\begin{definition}
Assume that $X$ is a non-empty universe, $J$ is an arbitrary set and $\{A_j: j \in J\}$ is a family of negotiation sets on $X$. We define the following operations:

\begin{enumerate}
\item \emph{Generalized union}:

$\bigcup_{j \in J} A_j = [\bigcup_{j \in J} A_j^1, \bigcup_{j \in J} A_j^2]$

\item \emph{Generalized intersection}:

$\bigcap_{j \in J} A_j = [\bigcap_{j \in J} A_j^1, \bigcap_{j \in J} A_j^2]$
\end{enumerate}
\end{definition}

It is easy to check that both complement and difference are well-defined: they result in a new negotiation set. The same can be said about union and intersection.

The next definition is new:

\begin{definition}
Assume that $X$ is a non-empty universe, $J$ is an arbitrary set and $\{A_j: j \in J\}$ is an indexed family of negotiation sets on $X$. We define the following operations:

\begin{enumerate}
\item \emph{Generalized minimalization} (of necessities): 

$\odot_{j \in J} A_j = [\bigcap_{j \in J}A_{j}^{1}, \bigcup_{j \in J}A_{j}^{2}]$

\item \emph{Generalized relative maximalization} (of necessities): 

$\oplus_{i \in J} A_j = [\bigcup_{j \in J} A_{j}^{1} \cap \bigcap_{j \in J} A_{j}^{2}, \bigcap_{j \in J} A_{j}^{2}]$
\end{enumerate}
\end{definition}

Note that for $J = \{1, 2\}$ we obtain binary instances that were presented in the introduction. In fact, we shall deal mostly with them: \\

$A \odot B = [A^1 \cap B^1, A^2 \cup B^2]$, $A \oplus B = [ (A^1 \cup B^1) \cap (A^2 \cap B^2), A^2 \cap B^2 ]$. \\

We can prove the following lemma:

\begin{lemma}
Let $X$ be a non-empty universe. Assume that $\{A_j; j \in J\}$ is a family of negotiation sets and $B$ is also negotiation set:

\begin{enumerate}
\item If $A_j \subseteq B$ for any $j \in J$, then $\odot_{j \in J} A_{j} \subseteq \bigcup_{j \in J} A_{j} \subseteq B$.

\item If $B \subseteq A_{j}$ for any $j \in J$, then $B \subseteq \bigcap_{j \in J} A_{j} \subseteq \oplus_{j \in J} A_{j}$. 

\end{enumerate}

\end{lemma}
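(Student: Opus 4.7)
The plan is to unpack both statements componentwise and reduce each required inclusion to a routine fact about unions and intersections of ordinary sets, using the defining property $A_j^{1}\subseteq A_j^{2}$ of negotiation sets only where genuinely needed. Throughout, I would tacitly assume $J\neq\emptyset$, picking some fixed $j_0\in J$ whenever a witness is required for an inclusion like $\bigcap_{j\in J}A_j^{1}\subseteq\bigcup_{j\in J}A_j^{1}$ (via $\bigcap_{j\in J}A_j^{1}\subseteq A_{j_0}^{1}\subseteq\bigcup_{j\in J}A_j^{1}$).

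For part (1), I would first verify $\odot_{j\in J}A_j\subseteq\bigcup_{j\in J}A_j$. Using the definition of $\subseteq$ for negotiation sets, this reduces to $\bigcap_{j\in J}A_j^{1}\subseteq\bigcup_{j\in J}A_j^{1}$ on first coordinates, which is immediate from the remark above, and to the trivial equality $\bigcup_{j\in J}A_j^{2}=\bigcup_{j\in J}A_j^{2}$ on second coordinates. Then, for $\bigcup_{j\in J}A_j\subseteq B$, the hypothesis $A_j\subseteq B$ gives $A_j^{1}\subseteq B^{1}$ and $A_j^{2}\subseteq B^{2}$ for every $j$, so the universal property of unions yields $\bigcup_{j\in J}A_j^{1}\subseteq B^{1}$ and $\bigcup_{j\in J}A_j^{2}\subseteq B^{2}$, which is exactly what is needed.

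For part (2), I would first check $B\subseteq\bigcap_{j\in J}A_j$. The hypothesis $B\subseteq A_j$ supplies $B^{1}\subseteq A_j^{1}$ and $B^{2}\subseteq A_j^{2}$ for every $j$, and the universal property of intersections gives $B^{1}\subseteq\bigcap_{j\in J}A_j^{1}$ and $B^{2}\subseteq\bigcap_{j\in J}A_j^{2}$. Next, for $\bigcap_{j\in J}A_j\subseteq\oplus_{j\in J}A_j$, the second coordinate is automatic since both sides equal $\bigcap_{j\in J}A_j^{2}$. On first coordinates, one must show $\bigcap_{j\in J}A_j^{1}\subseteq\bigl(\bigcup_{j\in J}A_j^{1}\bigr)\cap\bigl(\bigcap_{j\in J}A_j^{2}\bigr)$. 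The first factor is again the $\bigcap\subseteq\bigcup$ step; the second factor is the crucial place where the negotiation-set axiom enters, since $A_j^{1}\subseteq A_j^{2}$ for every $j$ forces $\bigcap_{j\in J}A_j^{1}\subseteq\bigcap_{j\in J}A_j^{2}$.

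There is no real obstacle here: the only subtle point is keeping track of where the negotiation-set hypothesis $A_j^{1}\subseteq A_j^{2}$ is actually used (namely, only in the second factor of the $\oplus$ inclusion), and noting the tacit assumption $J\neq\emptyset$ needed for $\bigcap\subseteq\bigcup$. Everything else is a direct appeal to the definitions of $\subseteq$, $\odot$, $\oplus$, $\bigcup$, $\bigcap$ and standard universal properties.
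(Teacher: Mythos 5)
Your proposal is correct and follows essentially the same componentwise argument as the paper: reduce each inclusion to its first and second coordinates, use the universal properties of $\bigcup$ and $\bigcap$ for the outer inclusions, and invoke $A_j^1\subseteq A_j^2$ exactly where the $\oplus$ first coordinate requires it. Your explicit remarks that $J\neq\emptyset$ is needed for $\bigcap_{j}A_j^1\subseteq\bigcup_{j}A_j^1$ and that the negotiation-set axiom enters only in the second factor are small refinements the paper leaves tacit, but the route is the same.
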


\begin{proof}
\begin{enumerate}
\item Assume that for each $j \in J$ we have $A_j^1 \subseteq B^1$ and $A_j^2 \subseteq B^2$. Then it is clear that $\bigcap_{j \in J} A_j^1 \subseteq B^1$ and $\bigcup_{j \in J} A_{j}^2 \subseteq B^2$. Of course we can say more: that $\bigcap_{j \in J} A_j^1 \subseteq \bigcup_{j \in J} A_j^1 \subseteq B^1$. Thus we obtain our expected conclusion.

\item For any $j \in J$ we have $B^1 \subseteq A_j^1$ and $B^2 \subseteq A_j^2$. If $x \in B^1$, then $x \in \bigcap_{j \in J} A_j^1 \subseteq \bigcup_{j \in J} A_j^1$. However, for any $j \in J$, $A_j^1 \subseteq A_j^2$. Hence $x \in \bigcap_{j \in J} A_j^1 \subseteq \bigcup_{i \in J} A_j^1 \cap \bigcap_{j \in J}A_{j}^2$. If $x \in B^2$, then $x \in A_j^2$ for any $j \in J$. Thus $x \in \bigcap_{j \in J} A_j^2$ and we obtain our final conclusion.

\end{enumerate}

\end{proof}

Let us introduce another, weaker type of inclusion:

\begin{definition}
Let $X$ be a non-empty universe and assume that $A, B$ are two negotiation sets on $X$. We say that $A \subseteq^1 B$ if and only if $A^1 \subseteq B^1$. Analogously, $A \subseteq^2 B$ if and only if $A^2 \subseteq B^2$.  
\end{definition}

Now we may formulate the following theorem:

\begin{theorem}
\label{multi}

Let $X$ be a non-empty universe. Assume that $\{A_j; j \in J\}$ is a family of negotiation sets. Then we have:

\begin{enumerate}

\item $-\odot_{j \in J} A_j \subseteq^1 \oplus_{j \in J} (-A_j)$.

\item $\oplus_{j \in J} (-A_j) \subseteq^2 -\odot_{j \in J} A_j$.

\item $\odot_{j \in J} (-A_j) \subseteq^1 -\oplus_{j \in J}A_j$.

\item $-\oplus_{j \in J}A_j \subseteq^2 \odot_{j \in J} (-A_j)$. 

\end{enumerate}

\end{theorem}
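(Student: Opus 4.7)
My plan is to unwind each of the four claims by applying the definitions of $\odot$, $\oplus$, and complement, then reduce the resulting set-theoretic inclusions to two elementary facts: the De Morgan identities for arbitrary families, and the monotonicity fact that $A_j^1 \subseteq A_j^2$ implies $-A_j^2 \subseteq -A_j^1$ for every $j \in J$. Throughout, I would use the abbreviations $P_j = -A_j^1$ and $Q_j = -A_j^2$, so that $(-A_j)^1 = Q_j$, $(-A_j)^2 = P_j$, and $Q_j \subseteq P_j$ for each $j$. This notational streamlining keeps the bookkeeping under control.

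For (1) and (2) I would compute both components of $-\odot_{j \in J} A_j = [\bigcap_j Q_j, \bigcup_j P_j]$ (using De Morgan on the generalized union and intersection) and of $\oplus_{j \in J}(-A_j) = [\bigcup_j Q_j \cap \bigcap_j P_j, \bigcap_j P_j]$. Claim (1) then reduces to verifying $\bigcap_j Q_j \subseteq \bigcup_j Q_j \cap \bigcap_j P_j$, which follows because $\bigcap_j Q_j \subseteq \bigcup_j Q_j$ trivially (using any fixed index to witness both sides), and $\bigcap_j Q_j \subseteq \bigcap_j P_j$ by the monotonicity fact applied termwise. Claim (2) reduces to the trivial $\bigcap_j P_j \subseteq \bigcup_j P_j$.

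For (3) and (4) I would similarly unfold $\odot_{j \in J}(-A_j) = [\bigcap_j Q_j, \bigcup_j P_j]$ and $-\oplus_{j \in J} A_j = [\bigcup_j Q_j, \bigcap_j P_j \cup \bigcup_j Q_j]$, the second component obtained by applying De Morgan to the complement of an intersection of a union and an intersection. Then (3) becomes $\bigcap_j Q_j \subseteq \bigcup_j Q_j$, which is again immediate; and (4) becomes $\bigcap_j P_j \cup \bigcup_j Q_j \subseteq \bigcup_j P_j$, which splits into $\bigcap_j P_j \subseteq \bigcup_j P_j$ (immediate) and $\bigcup_j Q_j \subseteq \bigcup_j P_j$ (by the termwise monotonicity $Q_j \subseteq P_j$).

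The only place where anything nontrivial happens is the use of $A_j^1 \subseteq A_j^2$ to flip inclusions under complementation; everything else is pure De Morgan bookkeeping. I anticipate the main risk is careless indexing when unfolding $\oplus$, since its first component mixes a union with an intersection and produces a compound expression on the complement side, but this is a matter of writing rather than of mathematical difficulty. I would present the four parts in sequence as four short computations, sharing the $P_j, Q_j$ notation to avoid repeating the De Morgan step four times.
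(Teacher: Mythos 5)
Your proposal is correct and follows essentially the same route as the paper: unfold all four expressions via De Morgan, then reduce each claim to the trivial inclusion $\bigcap \subseteq \bigcup$ together with the termwise fact that $A_j^1 \subseteq A_j^2$ forces $-A_j^2 \subseteq -A_j^1$. The $P_j, Q_j$ abbreviations are only a notational tidying of the paper's argument (which, like yours, tacitly assumes $J \neq \emptyset$ for the step $\bigcap_j \subseteq \bigcup_j$).
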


\begin{proof}

First, let us write: 

$-\odot_{j \in J} A_j = [-\bigcup_{j \in J} A_j^2, -\bigcap_{j \in J} A_j^1]$, 

$\oplus_{j \in J} (-A_j) = [\bigcup_{j \in J} (-A_j^2) \cap \bigcap_{j \in J} (-A_j^1), \bigcap_{j \in J} (-A_j^1)]$. 

Second, we have:

$-\oplus_{j \in J} A_j = [-\bigcap_{j \in J} A_j^2, -\left( \bigcup_{j \in J} A_j^1 \cap \bigcap_{j \in J} A_j^2\right)]$,

$\odot_{j \in J} (-A_j) = [-\bigcup_{j \in J} A_j^2, -\bigcap_{j \in J} A_j^1]$.

Now we can prove all the cases:

\begin{enumerate}

\item Assume that $x \in -\bigcup_{j \in J} A_j^2$. This means that for any $j \in J$, $x \notin A_j^2$. Now it is easy to show that $x \in \bigcup_{j \in J} (-A_j^2)$. Moreover, for any $j \in J$ we have $A_j^1 \subseteq A_j^2$. Hence for any $j \in J$, $x \notin A_j^1$. Thus $x \in \bigcap_{i \in J} (-A_j^1)$. 

\item It follows from the general principles of set theory that $\bigcap_{j \in J} (-A_j^1) \subseteq -\bigcap_{j \in J} A_j^1$. 

\item Assume that $x \in -\bigcup_{j \in J}A_j^2$. Again, from the basic principles $x \in -\bigcap_{j \in J} A_j^2$. 

\item Let $x \in -\left( \bigcup_{j \in J} A_j^1 \cap \bigcap_{j \in J} A_j^2\right)$. Hence $x \notin \left( \bigcup_{j \in J} A_j^1 \cap \bigcap_{j \in J} A_j^2\right)$. It means that $x \notin \bigcup_{j \in J} A_j^1$ or $x \notin \bigcap_{j \in J} A_j^2$. Consider the first option. In particular, it means that $x \notin \bigcap_{j \in J} A_j^1$, i.e. $x \in -\bigcap_{j \in J} A_j^1$. As for the second one: there must be $k \in J$ such that $x \notin A_k^2$. Then $x \notin A_k^1$. Again, $x \notin \bigcap_{j \in J}A_j^1$.
\end{enumerate}

\end{proof}

We cannot replace weaker inclusions by equalities. Consider the following counter-example for Th. \ref{multi} 1, 2. 

\begin{example}

Assume that $X = \{a, b, c, d, e, f, g\}$, $A = [\{a, b\}, \{a, b, c, d\}]$ and $B = [\{c, d\}, \{c, d, g\}]$. Now: $A \odot B = [\emptyset, \{a, b, c, d, g\}]$. Then $-(A \odot B) = [\{e, f\}, \{a, b, c, d, e, f, g\}]$. On the other hand $-A = [\{e, f, g\}, \{c, d, e, f, g\}]$, $-B = [\{a, b, e, f\}, \{a, b, e, f, g\}]$. Then $-A \oplus -B = [\{e, f, g\}, \{e, f, g\}] \nsubseteq^1 -(A \odot B)$. Moreover, $-(A \odot B) \nsubseteq^2 -A \oplus -B$. 

\end{example}

In practical applications we limit our early attention to the collection of some distinguished negotiation sets: the \emph{initial ones} (just like $A$ and $B$ in the preceding example). They represent points of view of several discussants. Actually, this collection can be (and probably \emph{will be}) smaller than the class of all imaginable negotiation sets. On the other hand, negotiating sets resulting from the application of $\odot$ or $\oplus$ can be considered as \emph{coalitions of agents}.

\begin{example}

Adam ($A$), Bernard ($B$) and Clara ($C$) are planning joint trip by car. They are discussing which items should be taken to the trunk. They want to determine which things are necessary and which are only optional. Let us say that $X = \{a = \text{map}, b = \text{flashlight}, c = \text{shoe polish}, d = \text{first aid kit}, e = \text{fishing rod}, f = \text{ball}, g = \text{night-vision device}, h = \text{tent}, i = \text{riffle}, k = \text{guitar}, l = \text{chest expander}\}$. 

Assume that their negotiation sets are:

$A = [\{a, d\}, \{a, d, f, g, h\}]$

$B = [\{a, b, d\}, \{a, b, d, f, i, l\}]$

$C = [\{a, h\}, \{a, d, h, k\}]$. 

Suppose that they are interested in minimalization of necessities. Then\footnote{In the next section we shall prove that both $\odot$ and $\oplus$ are associative.}:

$(A \odot B) \odot C = [\{a, d\}, \{a, b, d, f, g, h, i, l\}] \odot C = [\{a\}, \{a, b, d, f, g, h, i, k, l\}]$. 

$(A \oplus B) \oplus C = [\{a, d\}, \{a, d, f\}] \oplus C = [\{a\}, \{a, d\}]$.

We may imagine that at the first step Bernard and Clara use, say, $\oplus$ to find their compromise, and then they use $\odot$ to discuss their proposition with Adam. Hence they obtain:

$(B \oplus C) \odot A = [\{a\}, \{a, d\}] \odot A = [\{a\}, \{a, d, f, g, h\}]$. 

In each of these three cases they are only sure that a map should be taken. However, they have obtained three different ranges of admissibility. Of course their debate may take longer. 
\end{example}

The next example is more abstract and rather theoretical:

\begin{example}
Let $X = \mathbb{C}$ (the set of complex numbers). Assume that $A = [\mathbb{N}, \mathbb{R}]$, $B = [\{1, 2, 3\}, \mathbb{N}]$ and $C = [\mathbb{Z}, \mathbb{C}]$. We may calculate (among many other possible compositions): 

$A \odot B = [\{1, 2, 3\}, \mathbb{R}]$, $B \oplus C = [\mathbb{N}, \mathbb{N}]$, $(A \oplus B) \odot C = [\mathbb{N}, \mathbb{N}] \odot [\mathbb{Z}, \mathbb{C}] = [\mathbb{N}, \mathbb{C}]$.

\end{example}

\subsection{Algebraic properties}
Undoubtedly, some fundamental algebraic properties of our system should be checked. This is essentially a content of Th. \ref{yesprop} and Th. \ref{noprop}. 

\begin{theorem}
\label{yesprop}

Let $X \neq \emptyset$, $J$ be an arbitrary set and $\{A_j: j \in J\}$ be a family of negotiation sets on $X$. Then the following features of binary $\odot$ and $\oplus$ operations \textbf{hold}:

\begin{enumerate}

\item \textbf{Idempotence}

\begin{proof} Clearly, $A \oplus A = A \odot A = A$. \end{proof}

\item \textbf{Commutativity}.

\begin{proof}This is trivial and based on the commutativity of $\cup$ and $\cap$ operations. \end{proof}

\item $\oplus \odot$-\textbf{Absorption law}.

\begin{proof}

We may write:

$[A^1, A^2] \oplus [A^1 \cap B^1, A^2 \cup B^2] = [(A^1 \cup (A^1 \cap B^1)) \cap (A^2 \cap (A^2 \cup B^2)), A^2 \cap (A^2 \cup B^2)] = [A^1 \cap A^2, A^2] = [A^1, A^2]$. 

\end{proof}

\item \textbf{Associativity}.

\begin{proof}
First, consider $\odot$. We have: 

$A \odot (B \odot C) = A \odot [B^1 \cap C^1, B^2 \cup C^2] = [A^1 \cap B^1 \cap C^1, A^2 \cup B^2 \cup C^2] = (A \odot B) \odot C$. 

Second, we write:

$A \oplus (B \oplus C) = [A^1, A^2] \oplus [(B^1 \cup C^1) \cap (B^2 \cap C^2), B^2 \cap C^2] = [(A^1 \cup ((B^1 \cup C^1) \cap (B^2 \cap C^2))) \cap (A^2 \cap B^2 \cap C^2), A^2 \cap B^2 \cap C^2] = [(A^1 \cup B^1 \cup C^1) \cap (A^1 \cup (B^2 \cap C^2)) \cap (A^2 \cap B^2 \cap C^2), A^2 \cap B^2 \cap C^2]$. 

and, simultaneously:

$(A \oplus B) \oplus C = [(A^1 \cup B^1) \cap (A^2 \cap B^2), A^2 \cap B^2] \oplus (C^1, C^2) = [(((A^1 \cup B^1) \cap (A^2 \cap B^2)) \cup C^1) \cap (A^2 \cap B^2 \cap C^2), A^2 \cap B^2 \cap C^2] = [(A^1 \cup B^1 \cup C^1) \cap ((A^2 \cap B^2) \cup C^1)) \cap (A^2 \cap B^2 \cap C^2), A^2 \cap B^2 \cap C^2]$. \\

We see that ranges of admissibility of resulting sets are identical. What about their ranges of necessity? 

\textbf{i)} Let $x \in (A^1 \cup B^1 \cup C^1) \cap (A^1 \cup (B^2 \cap C^2)) \cap (A^2 \cap B^2 \cap C^2)$. In particular, it means that $x \in A^2 \cap B^2 \cap C^2$. Assume that $x \notin (A^1 \cup B^1 \cup C^1) \cap ((A^2 \cap B^2) \cup C^1)) \cap (A^2 \cap B^2 \cap C^2)$. The only option worth considering is $x \notin (A^2 \cap B^2) \cup C^1$. However, in this case $x \notin A^2 \cap B^2$, and this is contradiction.

\textbf{ii)} Let $x \in (A^1 \cup B^1 \cup C^1) \cap ((A^2 \cap B^2) \cup C^1)) \cap (A^2 \cap B^2 \cap C^2)$. In particular, it means that $x \in A^2 \cap B^2 \cap C^2$. Assume that $x \notin (A^1 \cup B^1 \cup C^1) \cap (A^1 \cup (B^2 \cap C^2)) \cap (A^2 \cap B^2 \cap C^2)$. The only option worth considering is $x \notin A^1 \cup (B^2 \cap C^2)$. However, in this case $x \notin A^1$ and $x \notin B^2 \cap C^2$. This is contradiction. 

\end{proof}

\end{enumerate}

\end{theorem}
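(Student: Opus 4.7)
The plan is to verify the four properties one by one, each reducing to coordinate-wise set-theoretic manipulations, with the defining inclusion $A^1 \subseteq A^2$ doing a little extra work in two places. For idempotence I would expand $A \odot A = [A^1 \cap A^1, A^2 \cup A^2] = A$ and $A \oplus A = [(A^1 \cup A^1) \cap (A^2 \cap A^2), A^2 \cap A^2] = [A^1 \cap A^2, A^2]$, and then collapse the first coordinate to $A^1$ using $A^1 \subseteq A^2$. Commutativity is immediate from the commutativity of $\cap$ and $\cup$ at every coordinate.

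For the $\oplus\odot$-absorption law I would compute $A \oplus (A \odot B)$ directly. Its second coordinate is $A^2 \cap (A^2 \cup B^2) = A^2$ by ordinary set-theoretic absorption, and its first coordinate is $\bigl(A^1 \cup (A^1 \cap B^1)\bigr) \cap \bigl(A^2 \cap (A^2 \cup B^2)\bigr) = A^1 \cap A^2 = A^1$, again using $A^1 \subseteq A^2$. So both coordinates reproduce those of $A$.

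The substantive work is associativity. For $\odot$ it is painless: both coordinates of the ternary expression unfold, on either bracketing, to $A^1 \cap B^1 \cap C^1$ and $A^2 \cup B^2 \cup C^2$ by associativity of $\cap$ and $\cup$. For $\oplus$ the first coordinate has the mixed shape $(X^1 \cup Y^1) \cap (X^2 \cap Y^2)$, so the two bracketings produce syntactically different expressions. My strategy is: (i) expand both $A \oplus (B \oplus C)$ and $(A \oplus B) \oplus C$; (ii) observe that both second coordinates are $P := A^2 \cap B^2 \cap C^2$, and that both first coordinates are \emph{explicitly} intersected with the same $P$; (iii) restrict the comparison to $x \in P$, where every nested subexpression of the form $(U \cup V) \cap (X^2 \cap Y^2)$ with $X, Y \in \{A, B, C\}$ collapses to $U \cup V$, since the relevant $X^2 \cap Y^2$ membership is automatic once $x \in P$. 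Both first coordinates should then reduce to $(A^1 \cup B^1 \cup C^1) \cap P$, giving equality.

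The main obstacle is exactly this $\oplus$-associativity step: a naive brute-force application of distributivity produces different-looking expressions, and the two sides are \emph{not} equal term by term without the common background $P$. The clean way forward is to factor $P$ out of both sides first and then use $A_j^1 \subseteq A_j^2$ to kill the middle clauses. Once that observation is made, the remaining computation is bookkeeping.
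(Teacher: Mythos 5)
Your proposal is correct and follows essentially the same route as the paper: direct coordinate-wise computation for idempotence, commutativity and the $\oplus\odot$-absorption law, and for $\oplus$-associativity the same key observation that both first coordinates carry the common factor $P = A^2 \cap B^2 \cap C^2$, which makes the syntactically mismatched middle clauses automatically satisfied (the paper phrases this as a two-directional element chase, you as a collapse of both sides to $(A^1 \cup B^1 \cup C^1) \cap P$, which is marginally cleaner). One small remark: in the $\oplus$-associativity step the inclusion $A_j^1 \subseteq A_j^2$ is not actually needed -- membership in $P$ alone kills the middle clauses -- so your closing sentence slightly overstates its role there.
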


\begin{theorem}
\label{noprop}

Let $X \neq \emptyset$, $J$ be an arbitrary set and $\{A_j: j \in J\}$ be a family of negotiation sets on $X$. Then the following features of binary $\odot$ and $\oplus$ operations  \textbf{do not} hold:

\begin{enumerate}

\item \textbf{Distributivity}.

\begin{proof}
Counterexamples: 

\textbf{1)} First, consider $A, B, C$ where $A^1 = \{x\}, A^2 = \{a, x\}$, $B^1 = \{b\}, B^2 = \{b, d\}$, $C^1 = \{c\}, C^2 = \{c, x\}$. Now:

$B \odot C = [\{b\} \cap \{c\}, \{b, d\} \cup \{c, x\}] = [\emptyset, \{b, c, d, x\}]$,

$A \oplus (B \odot C) = [\{x\} \cap \{x\}, \{x\}] = [\{x\}, \{x\}]$,

$A \oplus B = [\{x, b\} \cap \emptyset, \emptyset] = [\emptyset, \emptyset]$,

$A \oplus C = [\{x, c\} \cap \{x\}, \{x\}] = [\{x\}, \{x\}]$.

Finally, we obtain:

$(A \oplus B) \odot (A \oplus C) = [\emptyset \cap \{x\}, \emptyset \cup \{x\}] = [\emptyset, \{x\}] \neq A \oplus (B \odot C)$. 

\textbf{2)} Second, consider $A, B, C$ where $A^1 = \{a\}, A^2 = \{a\}, B^1 = \{x\}, B^2 = \{x, b\}, C^1 = \{x, a\}, C^2 = \{x, a, c\}$. Now:

$A \odot B = [\{a\} \cap \{x\}, \{a\} \cup \{x, b\}] = [\emptyset, \{a, x, b\}]$,

$A \odot C = [\{a\} \cap \{x, a\}, \{a\} \cup \{x, a, c\}] = [\{a\}, \{x, a, c\}]$,

$(A \odot B) \oplus (A \odot C) = [(\emptyset \cup \{a\}) \cap \{a, x\}, \{a, x\}] = [\{a\}, \{a, x\}]$,

$B \oplus C = [(\{x\} \cup \{x, a\}) \cap \{x\}, \{x\}] = [\{x\}, \{x\}]$.

Finally, we obtain:

$A \odot (B \oplus C) = [\{a\} \cap \{x\}, \{a\} \cup \{x\}] = [\emptyset, \{a, x\}] \neq (A \odot B) \oplus (B \odot C)$
\end{proof}

\item $\odot \oplus$-\textbf{Absorption law}. 

\begin{proof}

Counterexample:

Consider $A, B$ where $A^1 = \{x\}, A^2 = \{x\}, B^1 = \{b\}, B^2 = \{b\}$. Now:

$A \oplus B = [(\{x\} \cup \{b\}) \cap (\{x\} \cap \{b\}), \{x\} \cap \{b\}] = [\emptyset, \emptyset]$.

Hence we get the following result:

$A \odot (A \oplus B) = [\{x\} \cap \emptyset, \{x\} \cup \emptyset] = [\emptyset, \{x\}] \neq [\{x\}, \{x\}] = A$. 

\end{proof} 

\end{enumerate}

\end{theorem}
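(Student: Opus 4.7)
The statement is a pair of non-identities, so the proof plan is to exhibit concrete counterexamples for each failing law. Both reductions are computational once suitable $A$, $B$, $C$ are chosen, so my first step would be to algebraically simplify each putative identity in order to pinpoint where and why it must break.

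For the $\odot\oplus$-absorption law $A \odot (A \oplus B) = A$, I would expand the left-hand side componentwise. Using $A^1 \subseteq A^2$, a short computation yields
$$A \odot (A \oplus B) = \bigl[\, A^1 \cap B^2,\ A^2 \,\bigr],$$
which equals $A$ precisely when $A^1 \subseteq B^2$. Hence any pair of negotiation sets with a necessary element of $A$ lying outside the admissibility range of $B$ refutes the law; a two-element universe with singletons $A = [\{x\},\{x\}]$, $B=[\{y\},\{y\}]$ will do.

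For distributivity I would expand both sides of each candidate identity and compare coordinate by coordinate. Note that the second coordinates of $A \oplus (B \odot C)$ and $(A \oplus B) \odot (A \oplus C)$ both simplify to $A^2 \cap (B^2 \cup C^2)$, and symmetrically for the $\odot$-over-$\oplus$ direction, so any failure must live in the first coordinate. I would then engineer a configuration in which some element sits in the first coordinate on one side, via an absorbing disjunct, but is removed on the other because an intersection against $B^2 \cap C^2$ (respectively $B^1 \cap C^1$) fails to reach it. A universe of four or five points is enough for each direction.

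The main difficulty here is not conceptual but combinatorial: it is easy to pick $A$, $B$, $C$ on which both sides happen to coincide by accident, obscuring the general non-identity. For this reason I would keep the examples minimal and verify every intermediate application of $\odot$ and $\oplus$ componentwise before declaring victory.
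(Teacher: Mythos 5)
Your approach is correct but more structural than the paper's: the paper simply writes down two ad hoc triples and one pair and verifies the failures numerically, whereas you first reduce each law algebraically. For the $\odot\oplus$-absorption law your reduction is complete and in fact stronger than what the paper records: $A \odot (A \oplus B) = [A^1 \cap B^2, A^2]$ is correct (using $A^1 \cap (A^1\cup B^1)=A^1$ and $A^1\subseteq A^2$), so the law holds exactly when $A^1 \subseteq B^2$, and your witness $A=[\{x\},\{x\}]$, $B=[\{y\},\{y\}]$ is, up to renaming, the paper's own counterexample. Your observation that both distributive identities can only fail in the first coordinate is also correct: the second coordinates agree, being $A^2\cap(B^2\cup C^2)$ and $A^2\cup(B^2\cap C^2)$ respectively. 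The one thing still owed is the actual witnesses for distributivity --- a non-identity is only proved once a concrete failure is exhibited, and you stop at "a universe of four or five points is enough." The algebra you set up does deliver them: the first coordinate of $A\oplus(B\odot C)$ simplifies to $(A^1\cap(B^2\cup C^2))\cup(A^2\cap B^1\cap C^1)$ versus $(A^1\cap B^2\cap C^2)\cup(A^2\cap B^1\cap C^1)$ for $(A\oplus B)\odot(A\oplus C)$, so it suffices to place a point of $A^1$ in $C^2\setminus B^2$; and the first coordinate of $A\odot(B\oplus C)$ is $A^1\cap(B^1\cup C^1)\cap B^2\cap C^2$ versus $A^1\cap(B^1\cup C^1)$ for $(A\odot B)\oplus(A\odot C)$, so it suffices to place a point of $A^1\cap C^1$ outside $B^2$. (Note that in this second direction the culprit is again the intersection with $B^2\cap C^2$, not with $B^1\cap C^1$ as your parenthetical suggests.) The paper's examples are precisely instances of these two recipes. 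Once you write down one such triple for each direction, your proof is complete and arguably more informative than the paper's, since it explains why the counterexamples work rather than merely checking them.
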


The two theorems above allow us to say that the set of all negotiation sets on $X$ forms a structure which is slightly weaker than lattice (and slightly stronger than semi-lattice). As we could see, it satisfies only one absorption law. 

In this environment we can speak about analogues of empty set and the whole universe:

\begin{definition}
Assume that $X$ is a non-empty universe and $x \in X$. We point out two special types of negotiation sets: $\emptyset_{N} = [\emptyset, \emptyset]$ and $X_{N} = [X, X]$. 
\end{definition}

\begin{lemma}
\label{zeroone}
Assume that $X$ is a non-empty universe and $A$ is negotiation set on $X$. Then we have:

\begin{enumerate}
\item $A \odot X_{N} = [A^1, X]$.

\item $A \odot \emptyset_{N} = [\emptyset, A^2]$.

\item $A \oplus X_{N} = [A^2, A^2]$.

\item $A \oplus \emptyset_{N} = \emptyset_{N}$. 

\end{enumerate}
\end{lemma}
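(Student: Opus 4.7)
The plan is to verify each of the four identities by direct substitution into the definitions of $\odot$ and $\oplus$ given in Definition 2.4, and then to simplify the resulting set-theoretic expressions using only the trivial facts that $Y\cap X=Y$, $Y\cup X=X$, $Y\cap\emptyset=\emptyset$ and $Y\cup\emptyset=Y$ for any $Y\subseteq X$, together with the defining inclusion $A^1\subseteq A^2$ for a negotiation set.

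For parts (1) and (2) the calculation is essentially immediate. Writing $X_N=[X,X]$ and unfolding the definition, I get $A\odot X_N=[A^1\cap X, A^2\cup X]=[A^1,X]$; and similarly $A\odot\emptyset_N=[A^1\cap\emptyset, A^2\cup\emptyset]=[\emptyset,A^2]$. No step here uses anything beyond absorption with $X$ and $\emptyset$.

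For part (3) I substitute into the definition of $\oplus$ to obtain $A\oplus X_N=[(A^1\cup X)\cap(A^2\cap X), A^2\cap X]$. The outer component simplifies to $A^2$, and the inner component simplifies as $(A^1\cup X)\cap A^2=X\cap A^2=A^2$, yielding $[A^2,A^2]$. Here I rely on $A^1\cup X=X$ (a set-theoretic identity, since $A^1\subseteq X$); the hypothesis $A^1\subseteq A^2$ is not actually required for this part. For part (4) I substitute to get $A\oplus\emptyset_N=[(A^1\cup\emptyset)\cap(A^2\cap\emptyset), A^2\cap\emptyset]=[A^1\cap\emptyset,\emptyset]=[\emptyset,\emptyset]=\emptyset_N$.

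There is no genuine obstacle: each identity reduces to a one-line computation, and I should only make sure to write out enough intermediate steps (particularly for part (3), where two nested intersections and a union interact) to make the simplifications transparent. It may also be worth remarking briefly in passing that the right-hand sides of (1)--(4) are indeed negotiation sets, so that the identities live in the correct class; this is automatic from the general well-definedness of $\odot$ and $\oplus$ already noted after Definition 2.4.
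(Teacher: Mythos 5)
Your computations are correct and are exactly the routine verification the paper has in mind (it states Lemma \ref{zeroone} without proof, treating it as an immediate consequence of the definitions of $\odot$, $\oplus$, $X_N$ and $\emptyset_N$). Nothing is missing; your remark that $A^1\subseteq A^2$ is not even needed for part (3) is a small accurate bonus.
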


As we can see, there is no full analogy between $\emptyset_{N}$ (resp. $X_{N}$) and lattice bottom $\mathbf{0}$ (resp. top $\mathbf{1}$). We may also check \emph{half-empty} set $X_{P} = [\emptyset, X]$. 

\begin{lemma}
\label{one}
Assume that $X$ is a non-empty universe and $A$ is negotiation set on $X$. Then we have:

\begin{enumerate}
\item $A \odot X_{P} = X_{P}$.

\item $A \oplus X_{P} = A$.  
\end{enumerate}
\end{lemma}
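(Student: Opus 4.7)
The plan is to substitute $X_P = [\emptyset, X]$ directly into the binary formulas for $\odot$ and $\oplus$ given just after Definition 2.4, and then simplify componentwise using elementary set-theoretic identities. No induction, no case analysis, and no appeal to earlier algebraic lemmas are needed.

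For part (1), I would write $A \odot X_P = [A^1 \cap \emptyset, A^2 \cup X]$, then observe that $A^1 \cap \emptyset = \emptyset$ (absorbing element of intersection) and $A^2 \cup X = X$ (since $A^2 \subseteq X$, so $X$ is absorbing for union). This immediately yields $[\emptyset, X] = X_P$.

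For part (2), I would write $A \oplus X_P = [(A^1 \cup \emptyset) \cap (A^2 \cap X), A^2 \cap X]$. The right component simplifies to $A^2$ because $A^2 \subseteq X$. The left component simplifies first to $A^1 \cap A^2$ using $A^1 \cup \emptyset = A^1$, and then to $A^1$. The only nontrivial step here — and the only place where the negotiation-set structure enters — is recognizing that we must invoke the defining inclusion $A^1 \subseteq A^2$ from Definition 2.1 to conclude $A^1 \cap A^2 = A^1$. This is the single substantive point in the proof; everything else is pure bookkeeping with $\cup$, $\cap$, $\emptyset$, and $X$.

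I do not anticipate any genuine obstacle; the proof is essentially a two-line calculation in each case, and its role in the paper is mainly to justify treating $X_P$ as a kind of distinguished element behaving like a top for $\odot$ and a neutral element for $\oplus$, in contrast to the partial failures recorded in Lemma \ref{zeroone} for $\emptyset_N$ and $X_N$.
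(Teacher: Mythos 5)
Your computation is correct, and it is exactly the intended argument: the paper states Lemma \ref{one} without proof, treating it as an immediate substitution of $X_P=[\emptyset,X]$ into the binary formulas for $\odot$ and $\oplus$. You rightly identify the one non-bookkeeping step, namely using $A^1\subseteq A^2$ to reduce $A^1\cap A^2$ to $A^1$ in part (2).
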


Here we see that $X_{P}$ plays the role of $\mathbf{1}$: if $\odot$ and $\oplus$ are treated as (resp.) $\lor$ and $\land$. Note that this identification allows us to say that $\emptyset_{N}$ behaves in a way like $\mathbf{0}$. However, this correspondence is only partial: note that $A \oplus \emptyset_{N}$ does not result in $A$. 

\emph{Per analogiam} with intuitionistic points, we may speak about \emph{negotiation points}:

\begin{definition}
Assume that $X$ is a non-empty universe and $x \in X$. We point out two special types of negotiation sets: $x_{0.5} = [\emptyset, \{x\}]$ and $x_{1} = [\{x\}, \{x\}]$. 
\end{definition}

\begin{lemma}
\label{points}
Let $x, y \in X$ and $x \neq y$. Then we have the following results:

\begin{enumerate}
\item $x_{0.5} \oplus y_{0.5} = x_{0.5} \oplus y_{1} = x_{1} \oplus y_{1} = [\emptyset, \emptyset]$.
\item $x_{0.5} \odot y_{0.5} = x_{0.5} \odot y_{1} = x_{1} \odot y_{1} =  [\emptyset, \{x, y\}]$. 
\end{enumerate}

\end{lemma}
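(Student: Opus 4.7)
My plan is to handle both parts by direct unpacking of the definitions of $\oplus$ and $\odot$ applied to the three specified pairs of negotiation points. The only nontrivial input is the hypothesis $x \neq y$, which gives $\{x\} \cap \{y\} = \emptyset$; everything else is bookkeeping, and it turns out that the three cases within each part can be treated uniformly rather than one by one.

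For part (1), I would start by observing that every negotiation point of the form $x_{0.5}$ or $x_1$ has range of admissibility equal to $\{x\}$, and similarly for $y$. Hence for each of the three pairs, the second coordinate of the $\oplus$-result is $\{x\} \cap \{y\}$, which by $x \neq y$ equals $\emptyset$. Since the first coordinate of $A \oplus B$ is, by definition, intersected with $A^2 \cap B^2$, it too must be $\emptyset$. This collapses all three cases to $[\emptyset,\emptyset]$ simultaneously, so I would not split into three subcases.

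For part (2), the argument is symmetric in structure. The second coordinate of $A \odot B$ is $A^2 \cup B^2$, which in each of the three cases equals $\{x\} \cup \{y\} = \{x,y\}$. For the first coordinate, I would note that in every case at least one of $A^1$, $B^1$ is either empty (for $x_{0.5}$ or $y_{0.5}$) or equal to $\{x\}$, $\{y\}$ respectively (for $x_1$, $y_1$); in either situation $A^1 \cap B^1 \subseteq \{x\} \cap \{y\} = \emptyset$. So each case yields $[\emptyset, \{x,y\}]$.

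I expect no real obstacle: both parts reduce, via the defining formulas for $\odot$ and $\oplus$ together with the disjointness $\{x\} \cap \{y\} = \emptyset$, to one-line computations, and the uniform treatment across the three cases means the proof can be written in just a few lines total. The only thing to be careful about is remembering that in the definition of $\oplus$, the first coordinate carries the extra intersection with $\bigcap A_j^2$, which is precisely what forces it to collapse in part (1).
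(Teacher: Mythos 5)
Your proof is correct and is exactly the routine computation intended here; the paper itself states Lemma \ref{points} without proof, treating it as immediate from the definitions of $\odot$, $\oplus$ and the negotiation points, which is what you carry out (with the nice uniform observation that $A^1\subseteq\{x\}$, $B^1\subseteq\{y\}$, and $A^2=\{x\}$, $B^2=\{y\}$ in all three cases).
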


In the next section we shall deal not only with our negotiation sets but also with the internal structure of universe $X$. 

\section{Contradictions and relationships}

We assumed, by default, that our basic objects (the elements of $X$) are in some sense mutually independent and peacefully coexisting, while our points of view (negotiation sets) are equal: there is no any specified hierarchy between them. However, these assumptions do not exhaust the whole richness of reality. First of all, we may easily imagine that there is a kind of conflict between some of our objects. In short, they can be mutually contradictory. For example, if our objects are logic formulas (sentences), then it is enough to consider pair $x = \varphi$ and $y = \lnot \varphi$ (we assume tacitly that our logic is not paraconsistent). It is also possible that $x$ and $y$ are two scenarios which are (in practice) mutually exclusive. For example, $x$ means "holidays\footnote{In the sense of vacations.} on Tristan da Cunha", while $y$ means "holidays on St Kilda" (at the same time and with the same participants). Clearly, it is not possible to be in two different locations at once. Moreover, it is possible that (due to the certain reasons like lack of cash) we cannot visit these two places even one after another. Hence, these scenarios are still incompatible.

Note that these contradictions are inherent for the elements in question: they do not depend on the points of view of the agents. Of course, one could discuss another approach: that $x$ and $y$ are contradictory from the perspective of agent $A$ but agent $B$ does not recognize any conflict between them. Undoubtedly, this model should also be investigated but it is beyond the scope of present study. 

In this section we shall introduce two variants of contradiction: stronger and weaker one. The idea is that our negotiation sets should be \emph{consistent} (in some sense which will be defined later). If a given set is consistent in this sense, then we say that it is admitted to discussion: it belongs to the class \disc. 

Let us introduce some basic definitions:

\begin{definition}
Assume that $X$ is a non-empty universe and $\lightning$, $\wr$ are two binary, symmetric and anti-reflexive relations on $X$. We define \disc as the following class of negotiation sets: $A \in \disc$ if and only if the two requirements below are satisfied:

\begin{enumerate}
\item There are no any $x$, $y$ in $A^2$ such that $x \lightning y$. 

\item There are no any $x$, $y$ in $A^2$ such that $x \wr y$ and ($x \in A^1$ or $y \in A^1$).

\end{enumerate}

\end{definition}

This definition says that $\lightning$ reflects the idea of strong contradiction: it is not possible that two strongly contradictory objects are in the range of admissibility of a consistent negotiation set. It is possible that two weakly contradictory objects (i.e. such that $x \wr y$) are in this range. However, they both should be in $A^2 \setminus A^1$. It means that we leave them for a further discussion (so they are temporarily admissible) but cannot assume that even one of them is necessary.

We may prove the following theorem:

\begin{theorem}
Let $X$ be a non-empty universe with relations $\lightning$ and $\wr$. Then the set \disc is closed under operation $\oplus$. 
\end{theorem}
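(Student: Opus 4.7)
My plan is to unpack the definition of $A \oplus B$ and check the two defining conditions of $\disc$ directly, using the fact that $A \oplus B$ has its range of admissibility contained in both $A^2$ and $B^2$, and its range of necessity contained in $A^1 \cup B^1$ (intersected with $A^2 \cap B^2$). I will present the argument for the binary case, and the same argument will generalize coordinate-wise to the family operation $\oplus_{j \in J}$ since $\bigcap_{j \in J} A_j^2 \subseteq A_k^2$ and $\bigcup_{j \in J} A_j^1$ forces membership in some individual $A_k^1$.

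Fix $A, B \in \disc$ and set $C = A \oplus B$, so $C^2 = A^2 \cap B^2$ and $C^1 = (A^1 \cup B^1) \cap (A^2 \cap B^2)$. For the strong-contradiction clause, I would take any $x, y \in C^2$; since $C^2 \subseteq A^2$, the assumption $A \in \disc$ (clause 1 applied to $A$) immediately rules out $x \lightning y$. This step is essentially free and handles condition 1.

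The second clause is the only one requiring a small case split. Suppose for contradiction there are $x, y \in C^2$ with $x \wr y$ and (say) $x \in C^1$. Then $x \in A^1 \cup B^1$, so $x \in A^1$ or $x \in B^1$. In the first case, both $x, y$ lie in $A^2$ (since $C^2 \subseteq A^2$) with $x \wr y$ and $x \in A^1$, contradicting clause 2 of $A \in \disc$; in the second case, the symmetric argument contradicts $B \in \disc$. The case $y \in C^1$ is handled identically by symmetry of $\wr$.

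I do not foresee a real obstacle: the key design feature of $\oplus$ that makes the proof go through is precisely that $C^2 = A^2 \cap B^2$ shrinks the admissibility range (so condition 1 is inherited from either parent), while every element of $C^1$ already witnesses its own necessity in at least one parent. It is worth noting that the dual operation $\odot$ would \emph{not} preserve $\disc$ in general, because $C^2 = A^2 \cup B^2$ can introduce new pairs $x \in A^2$, $y \in B^2$ with $x \lightning y$; this contrast is what makes the statement of the theorem specific to $\oplus$.
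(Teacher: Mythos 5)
Your proof is correct and follows essentially the same argument as the paper: condition 1 is inherited because $(A\oplus B)^2 = A^2\cap B^2\subseteq A^2$, and condition 2 follows from the case split $x\in A^1$ or $x\in B^1$ combined with $y\in A^2\cap B^2$. Your closing remark that $\odot$ fails to preserve \disc{} is also confirmed by the paper's Example 3.5.
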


\begin{proof}

Assume that $A, B \in \disc$ and suppose that our thesis is not true: $(A \oplus B) \notin \disc$. There are two possible reasons:

\begin{enumerate}
\item There are $x, y \in (A \oplus B)^2$ such that $x \lightning y$. However, $(A \oplus B)^2 = A^2 \cap B^2$, hence $x, y \in A^2$ and $x, y \in B^2$. This is contradiction, because $A, B \in \disc$. 

\item There are $x, y \in (A \oplus B)^2$ such that $x \wr y$ and (without loss of generality) $x \in (A \oplus B)^1$, $y \in (A \oplus B)^2$. Hence $x \in (A^1 \cup B^1) \cap (A^2 \cap B^2)$. In particular, it means that $x \in A^1$ or $x \in B^1$. However, at the same time $y \in A^2$ and $y \in B^2$. This gives us contradiction because $x \wr y$ and $A, B \in \disc$, hence it is not possible that $x \in A^1, y \in A^2$ or $x \in B^1, y \in B^2$. 

\end{enumerate}

\end{proof}

What about $\odot$ operation? We have this partial result:

\begin{lemma}

Let $X$ be a non-empty universe with relation $\wr$ and $A, B \in \disc$. Then there are no such $x, y \in (A \odot B)^2$ that $x \wr y$ and (without loss of generality) $x \in (A \odot B)^1$, $y \in (A \odot B)^2$.

\end{lemma}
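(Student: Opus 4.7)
The plan is to argue by contradiction, mimicking the structure of the proof that $\disc$ is closed under $\oplus$, but adapted to the different definition of $\odot$. Suppose such $x, y$ exist. Unfolding the definition, $(A \odot B)^1 = A^1 \cap B^1$ and $(A \odot B)^2 = A^2 \cup B^2$, so from $x \in (A \odot B)^1$ I get $x \in A^1$ and $x \in B^1$ simultaneously; from $y \in (A \odot B)^2$ I get the weaker disjunctive information $y \in A^2$ or $y \in B^2$.

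The key observation is that since $A$ and $B$ are negotiation sets, $A^1 \subseteq A^2$ and $B^1 \subseteq B^2$, hence $x$ in fact lies in both $A^2$ and $B^2$. So no matter which of the two sets happens to contain $y$, I can pair $x$ and $y$ inside the same $A^2$ or $B^2$.

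Now I split into two cases on the disjunction for $y$. If $y \in A^2$, then I have witnesses $x, y \in A^2$ with $x \wr y$ and $x \in A^1$, which violates clause (2) of $A \in \disc$. Symmetrically, if $y \in B^2$, the same witnesses inside $B$ violate $B \in \disc$. Either way we obtain a contradiction, so no such pair $x,y$ can exist.

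The main obstacle, modest as it is, is simply to recognize that the inclusion $A^1 \subseteq A^2$ (and the analogue for $B$) has to be invoked explicitly: without it, $x \in A^1 \cap B^1$ would not by itself place $x$ into $A^2$ or $B^2$, and the case analysis on $y$ would not directly yield the required common membership needed to trigger the $\disc$-conditions on $A$ or $B$. Once that is noted, the argument is a straightforward case distinction. I would remark afterwards that the lemma is only a partial analogue of the $\oplus$-closure result: it handles the $\wr$-clause but not the $\lightning$-clause, and indeed a strong contradiction $x \lightning y$ with $x \in A^2 \setminus B^2$ and $y \in B^2 \setminus A^2$ would survive into $(A \odot B)^2 = A^2 \cup B^2$, so $\disc$ is not in general closed under $\odot$.
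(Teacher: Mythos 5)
Your proof is correct and follows essentially the same route as the paper's: unfold $(A\odot B)^1 = A^1\cap B^1$ and $(A\odot B)^2 = A^2\cup B^2$, then case-split on whether $y\in A^2$ or $y\in B^2$ to contradict clause (2) of $A\in\disc$ or $B\in\disc$. You are in fact slightly more careful than the paper in making explicit that $A^1\subseteq A^2$ is needed to place $x$ in the relevant range of admissibility, and your closing remark about the $\lightning$-clause matches the paper's own counterexample.
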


\begin{proof}
Assume the contrary. Hence, $x \in (A \odot B)^1$, $y \in (A \odot B)^2$. It means that $x \in A^1$ and $x \in B^1$. At the same time $y \in A^2$ or $y \in B^2$. Clearly, we obtain contradiction (because $x \wr y$).

\end{proof}

However, things are more complicated if we consider $\lightning$ relation. Let us discuss the following (counter)-example:

\begin{example}
\label{contrex}
Let $X$ be a non-empty universe with relation $\lightning$. Assume that $a \lightning b$ (in particular, it means also that $a \neq b$) and $A = [\{a\}, \{a\}]$, $B = [\{b\}, \{b\}]$. Then both $A$ and $B$ belong to \disc. Now consider $A \odot B = [\emptyset, \{a, b\}]$. Then $A \odot B \notin \disc$. 
\end{example}

Several solutions to this problem can be applied:

\begin{enumerate}
\item We may assume that only some actions with use of $\odot$ are \emph{permissible}. If there a problem occurs (similar to the one from Ex. \ref{contrex}), then we just say that it is not possible to solve it. This round of negotiations fails. 

\item We may assume that each two strongly contradictiory objects $x$, $y$ are also connected by asymmetric, transitive and anti-reflexive relation $>$. The interpretation goes as follows: if $A, B \in \disc$ but $A \odot B \notin \disc$ because of some $x, y \in (A \odot B)^2$ such that $x \lightning y$ and (without loss of generality) $x > y$, then we leave only $x$ in $(A \odot B)^2$. This solution has one advantage: each conflict can be solved. However, relation $>$ does not depend on the point of view: it is inherent for the pairs of objects themselves. While in the case of $\lightning$ this approach is natural, the same cannot be said about $>$. Clearly, it means that all participants of discussion agree that, for example, holidays on Mars are always better when compared to the holidays on the Moon. 

\item We may determine certain (total) order not between objects but between our (initial) negotiation sets. This would mean that our points of view are hierarchically ranked. In other words, if agent $A$ has priority over $B$ and $B$ has priority over $C$, then we may write: $A > B > C$. If $A, B \in \disc$ but $A \odot B \notin \disc$ because of some $x, y \in (A \odot B)^2$ such that $x \lightning y$ and $x \in A^2$, then we leave only $x$ in $(A \odot B)^2$. 

This solution has one serious flaw: it says nothing about the hierarchy of sets obtained by means of $\oplus$ and $\odot$. Basically, it refers only to the initial, "pure" points of view. For example, it is natural that the president's opinion is more important that the opinion of vice-president; and the opinion of the latter is stronger than the opinion of secretary of state\footnote{Clearly, this description is simplified and skips some nuances.}. However, if we assume arbitrarily that, say, $A > B > C$, then it does not give us any information about the relationship between $A \oplus B$ and $C$ or $A \odot C$ and $B \oplus C$. One could say that if we have finitely many finite initial sets, then we can determine hierarchy of each two possible results. However, this would be artificial and impractical.

Of course we may invent more precise relations. For example, we can assume that $A > B$ if $A^1$ contains \emph{fewer} elements than $B^1$. It would mean that priority goes to those agents (or coalitions of agents) which have lesser expectations regarding necessity. However, such relation is not total. Hence, it does not solve all contradictions.

\end{enumerate}

\section{Further studies}
Other problems should also be a matter of further studies. The fact that certain object is strongly contradictory with another one does not always mean that choosing one of them is satisfying. For example, $x = $ "holidays in a tropical country" is contradictory with $y = $ "holidays on Svalbard". However, one could say that these scenarios belong to different classes. Being on Svalbard is something concrete, while being "in a tropical country" is vague. In some sense, these are philosophical and linguistic considerations. On the other hand, such subtle distinctions can be expressed in a formal language of functions and relations.

Moreover, there are also algorithmic issues. The question is: which negotiation set should be considered as a final effect of discussion; and is it possible to determine steps leading to the achievement of this goal?

Another interesting topic is topology. It is not clear which of our operations is "closer" to the idea of union and which resembles intersection. Perhaps $\odot$ has more to do with union because it joins together external parts of negotiation sets, hence resulting sets are "bigger" when considered just as subsets of $X$. Hence, it may be valuable to consider families closed under arbitary minimalizations of necessities (that is, $\odot$) and finite maximalizations (that is, $\oplus$). However, we may imagine that the roles of these two operators are replaced.

\end{document}